\theoremstyle{plain}
\newtheorem{theorem}{Theorem}[section]
\newtheorem{lemma}[theorem]{Lemma}
\newtheorem{proposition}[theorem]{Proposition}
\theoremstyle{definition}
\newtheorem{definition}[theorem]{Definition}
\newcommand\bA{{\mathbb A}}
\newcommand\bF{{\mathbb F}}
\newcommand\bG{{\mathbb G}}
\newcommand\bP{{\mathbb P}}
\newcommand\bZ{{\mathbb Z}}
\newcommand\cO{{\mathcal O}}
\newcommand\cT{{\mathcal T}}
\newcommand\aff{{\rm aff}}
\newcommand\ant{{\rm ant}}
\newcommand\charc{{\rm char}}
\newcommand\op{{\rm op}}
\newcommand\Aut{{\rm Aut}}
\newcommand\Ext{{\rm Ext}}
\newcommand\Hom{{\rm Hom}}
\newcommand\Lie{{\rm Lie}}
\newcommand\Pic{{\rm Pic}}
\newcommand\PGL{{\rm PGL}}
\title{On connected automorphism groups of algebraic varieties}
\author{Michel Brion}
\date{}
\begin{document}

\maketitle
 
\begin{abstract}
Let $X$ be a normal projective algebraic variety, $G$ its largest 
connected automorphism group, and $A(G)$ the Albanese variety
of $G$. We determine the isogeny class of $A(G)$ in terms of the 
geometry of $X$. In characteristic $0$, we show that the dimension 
of $A(G)$ is the rank of every maximal trivial direct summand of 
the tangent sheaf of $X$. Also, we obtain an optimal bound for 
the dimension of the largest anti-affine closed subgroup of $G$ 
(which is the smallest closed subgroup that maps onto $A(G)$).
\end{abstract}

\section{Introduction}
\label{sec:int}

Throughout this note, we consider algebraic varieties over 
a fixed algebraically closed field $k$. Let $X$ be 
a complete variety; then its automorphism functor is represented
by a group scheme $\Aut(X)$, locally of finite type (see 
\cite[Thm.~3.7]{Matsumura-Oort}). Thus, $X$ has a largest
connected algebraic group $G = G(X)$ of automorphisms: 
the reduced neutral component of $\Aut(X)$. In general, 
$G$ is not affine; equivalently, it is not linear. 
For instance, if $X$ is an abelian variety, 
then $G$ is just $X$ acting on itself by translations. 

How to measure the nonlinearity of $G$ in terms of the geometry
of $X$? In this note, we obtain several partial answers to that
question. We begin by determining the Albanese variety of $G$ 
up to isogeny. For this, we recall a theorem of Chevalley
(see \cite{Chevalley}, and \cite{Conrad}, \cite[Chap.~2]{BSU}
for modern proofs): $G$ sits in an exact sequence of connected 
algebraic groups
\[ 
1 \longrightarrow G_{\aff} \longrightarrow G 
\stackrel{\alpha}{\longrightarrow} A(G)
\longrightarrow 1, 
\]
where $G_{\aff}$ is affine and $A(G)$ is an abelian variety; 
the map $\alpha$ is the Albanese morphism of $G$. We also 
need the following:

\begin{definition}\label{def:hf}
A \emph{homogeneous fibration} is a morphism $f : X \to A$
satisfying the following conditions:

(i) $f_*(\cO_X) = \cO_A$.

(ii) $A$ is an abelian variety.

(iii) $f$ is isomorphic to its pull-back by any translation
in $A$.
\end{definition}

Note that every fiber of a homogeneous fibration is connected 
by (i), and all these fibers are isomorphic by (iii). We may 
now state our first result:

\begin{proposition}\label{prop:hf}
Let $X$ be a normal projective variety.

{\rm (i)} For any homogeneous fibration $f: X \to A$, the $G$-action
on $X$ induces a transitive action of $A(G)$ on $A$ by translations.

{\rm (ii)} There exists a homogeneous fibration $f : X \to A$ 
such that the resulting homomorphism $A(G) \to A$ is an isogeny.
\end{proposition}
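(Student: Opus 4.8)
The plan is to treat (i) by a rigidity argument that produces the translation action, and then to build the fibration in (ii) from the Albanese morphism of $X$ together with Poincar\'e's complete reducibility theorem. For (i), fix $x_0\in X$ and consider the morphism $\Psi\colon X\times G\to A$, $\Psi(x,g)=f(g\cdot x)-f(x)$, the difference being taken in the group law of $A$. Since $X$ is projective, hence complete, and $\Psi$ vanishes identically on $X\times\{e\}$, the rigidity lemma shows that $\Psi$ factors through the projection to $G$: there is a morphism $c\colon G\to A$ with $c(e)=0$ and $f(g\cdot x)=f(x)+c(g)$ for all $x,g$. Associativity of the action then forces $c$ to be a homomorphism, so $G$ acts on $A$ by translations and $f$ is equivariant. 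As $G_{\aff}$ is connected and affine it has no nonconstant homomorphism to the abelian variety $A$, so $c$ kills $G_{\aff}$ and descends to $\bar{c}\colon A(G)\to A$. This already yields the translation action of $A(G)$ on $A$.

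It remains to prove transitivity, i.e.\ that $\bar{c}$ is surjective, and this is where hypothesis (iii) enters; it is the first delicate point. I would introduce the isotropy set $A_f=\{b\in A : t_b^*f\cong f \text{ over } A\}$. Composition of isomorphisms makes $A_f$ a subgroup of $A(k)$, and representability of the relative isomorphism scheme over $A$ (which is of finite type, the fibres being projective) shows that $A_f$ consists of the $k$-points of a closed subgroup scheme of $A$. Now (iii) says $A_f=A(k)$; since $A$ is reduced and irreducible, this closed subscheme must be all of $A$. Choosing, over a suitable finite cover $A'\to A$, a section of the isomorphism scheme, I obtain an algebraic family of automorphisms of $X$, parametrised by $A'$, which realizes every translation $t_b$ and specializes to $\id$. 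Being connected and meeting $\id$, its image in $\Aut(X)$ lies in $\Aut^\circ(X)_{\rm red}=G$, and the induced map $A'\to A$ sending a parameter to the translation amount is just the covering map, hence surjective. Therefore $c$, and with it $\bar{c}$, is surjective, so the $A(G)$-action on $A$ is transitive.

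For (ii), let $\beta\colon X\to\Alb(X)$ be the Albanese morphism. By the same rigidity (equivalently Blanchard's lemma) $G$ acts on $\Alb(X)$ by translations through a homomorphism $\bar{c}\colon A(G)\to\Alb(X)$, with $\beta$ equivariant; set $B=\overline{\bar{c}(A(G))}$, an abelian subvariety. By Poincar\'e complete reducibility there is an abelian subvariety $B'\subseteq\Alb(X)$ with $B+B'=\Alb(X)$ and $B\cap B'$ finite; put $A=\Alb(X)/B'$ and $f=\pi\circ\beta$, where $\pi\colon\Alb(X)\to A$ is the quotient. Then $\pi|_B\colon B\to A$ is an isogeny, $A$ is an abelian variety, and $f$ is $G$-equivariant for the translation action of $A(G)$ on $A$ through $\pi\circ\bar{c}$. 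To secure condition (i) of Definition~\ref{def:hf}, I replace $A$ by the base of the Stein factorization $X\to A_1\to A$ of $f$: by Blanchard's lemma the $G$-action descends to $A_1$, which then carries a transitive action of $A(G)$ by translations and is therefore an abelian variety isogenous to $A$, while $X\to A_1$ satisfies $(\,\cdot\,)_*\cO_X=\cO_{A_1}$. Condition (iii) holds because every translation of $A_1$ is induced by an element of $G$, so part (ii) of the Proposition reduces to the single assertion that $\pi\circ\bar{c}$, equivalently $\bar{c}$, has finite kernel.

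The hard part is exactly this last assertion, namely that $\bar{c}\colon A(G)\to\Alb(X)$ has finite kernel. Suppose not; then the neutral component $S$ of $\ker\bar{c}$ is a nonzero abelian variety, and lifting it I obtain a nontrivial anti-affine subgroup $H\subseteq G$ with $A(H)\ne 0$ that acts faithfully on $X$ but trivially on $\Alb(X)$, so that $\beta$ is $H$-invariant and every $H$-orbit lies in a fibre of $\beta$. I expect to derive a contradiction from the fact that a general fibre $F$ of $\beta$ has $\Alb(F)=0$, while $H$ acts faithfully on the generic fibre with nonzero abelian part: a faithful action of an anti-affine group with $A(H)\ne0$ on a complete variety ought to force that variety to have nonzero Albanese. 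Making this precise, using the structure theory of anti-affine groups (which in general contain no abelian subvariety at all) together with the universal property of the Albanese, is the main obstacle and the technical heart of the argument; it is precisely here that the geometry of $X$ controls the abelian part of $G$.
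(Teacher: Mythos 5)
Part (i) of your proposal is essentially the paper's argument: the paper obtains the translation action from Blanchard's lemma (using $f_*(\cO_X)=\cO_A$) where you use the rigidity lemma (using completeness of $X$), and both proofs get surjectivity of $A(G)\to A$ by using condition (iii) of Definition \ref{def:hf} to lift all translations of $A$ to automorphisms of $X$ and observing that the resulting connected family lands in $G$. This part is fine.

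Part (ii) has a genuine gap, and you have located it yourself: everything is reduced, via Poincar\'e reducibility and Stein factorization, to the single claim that $\bar c\colon A(G)\to\Alb(X)$ has finite kernel, and that claim is left unproved. This claim is not a technical detail one can expect to fill in by soft arguments --- it is exactly the content of the theorem of Nishi and Matsumura as generalized in \cite[Thm.~2]{Brion10} (the existence of a $G$-equivariant morphism $X\to A(G)/F$ with $F$ finite), which is what the paper invokes at this point; the paper then simply takes the Stein factorization of that morphism. Worse, the route you sketch toward it cannot work as stated: you propose to derive a contradiction from the principle that a faithful action of an anti-affine group $H$ with $A(H)\neq 0$ on a complete variety forces that variety to have nonzero Albanese. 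That principle is false for complete normal varieties --- Example \ref{subsec:ex}.3 of the paper exhibits a complete normal threefold with a faithful action of an elliptic curve and trivial Albanese variety, and this is precisely why Proposition \ref{prop:hf} fails in the merely complete normal setting. Any correct proof of the finiteness of $\ker\bar c$ must use projectivity (or smoothness) of $X$ in an essential way (in \cite{Brion10} this enters through the existence of ample, suitably linearized line bundles); your sketch uses projectivity nowhere, so it would ``prove'' a false statement and cannot be completed along the lines indicated.
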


The easy proof is given in Subsection \ref{subsec:hf}.
The statement also holds when $X$ is (complete and) nonsingular, 
but fails for certain normal threefolds in view of Example 
\ref{subsec:ex}.3 (based on a construction of Raynaud, see 
\cite[XIII 3.2]{Raynaud}). Also, there generally exists 
no homogeneous fibration to $A(G)$ itself, as shown by 
Example \ref{subsec:ex}.4.

Next, we obtain an interpretation of the Lie algebra 
$\Lie(G_{\aff})$ in terms of nowhere vanishing vector fields, 
under the assumption that $k$ has characteristic $0$
(but $X$ is possibly singular). Then $\Lie(G)$ is identified 
with the Lie algebra of vector fields, that is, of global 
sections of the sheaf $\cT_X$ of derivations of $\cO_X$  
(see again \cite[Thm.~3.7]{Matsumura-Oort}). The associated
evaluation map
\[ \op_X : \cO_X \otimes_k \Lie(G) \longrightarrow \cT_X \] 
yields a map
\[ \op_{X,x} : \Lie(G) \longrightarrow T_x(X), \quad
\xi \longmapsto \xi_x \]
for any point $x\in X$, where 
$T_x(X) := \cT_X \otimes_{\cO_X} k(x)$ denotes the Zariski 
tangent space of $X$ at $x$. We say that $x$ is a zero of $\xi$, 
if $\xi_x = 0$. Since $\op_{X,x}$ is identified to 
the differential of the orbit map $G \to X$, 
$g \mapsto g \cdot x$ at the neutral element of $G$, 
we see that $x$ is a zero of $\xi$ if and only if $\xi$ sits 
in the isotropy Lie algebra $\Lie(G)_x$. 

\begin{proposition}\label{prop:vf}
Assume that $\charc(k) = 0$. Let $X$ be a complete normal variety 
and let $V$ be a subspace of $\Lie(G)$. Then the following 
assertions are equivalent:

{\rm (i)} $V \setminus \{ 0 \}$  consists of nowhere vanishing
vector fields.

{\rm (ii)} $V \cap \Lie(G_{\aff}) = \{ 0 \}$.

{\rm (iii)} $\op_X$ identifies $\cO_X \otimes V$ to a direct 
summand of $\cT_X$.
\end{proposition}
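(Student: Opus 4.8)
The plan is to prove the cyclic chain $(i) \Rightarrow (ii) \Rightarrow (iii) \Rightarrow (i)$, with Proposition \ref{prop:hf} as the key geometric input for $(ii) \Rightarrow (iii)$ and Borel's fixed point theorem driving $(i) \Rightarrow (ii)$. For $(i) \Rightarrow (ii)$ I would argue the contrapositive, by showing that \emph{every} nonzero $\xi \in \Lie(G_{\aff})$ vanishes somewhere on $X$. Given such a $\xi$, consider the smallest algebraic subgroup $H \subseteq G_{\aff}$ with $\xi \in \Lie(H)$; since $\charc(k) = 0$, this $H$ is connected and commutative (the Zariski closure of the one-parameter subgroup generated by $\xi$, or equivalently the product of the subgroups attached to the semisimple and nilpotent parts of $\xi$), in particular solvable and linear. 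As $X$ is complete, Borel's fixed point theorem yields a point $x$ fixed by $H$, so $H \subseteq G_x$ and hence $\xi \in \Lie(G)_x$, i.e.\ $\xi_x = 0$. A nonzero element of $V \cap \Lie(G_{\aff})$ would therefore contradict (i), forcing $V \cap \Lie(G_{\aff}) = \{0\}$.

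For $(ii) \Rightarrow (iii)$, which I expect to be the crux, I would use Proposition \ref{prop:hf}(ii) to fix a homogeneous fibration $f : X \to A$ for which the resulting homomorphism $\phi : A(G) \to A$ is an isogeny, and exploit the natural $\cO_X$-linear map $df : \cT_X \to f^*\cT_A$ dual to the pullback $f^*\Omega_A \to \Omega_X$ of Kähler differentials; this map exists for any morphism to the smooth variety $A$, so the possible singularities of $X$ cause no trouble. Since $A$ is an abelian variety, $\cT_A \cong \cO_A \otimes_k \Lie(A)$ via translation-invariant fields, whence $f^*\cT_A \cong \cO_X \otimes_k \Lie(A)$ is trivial. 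By the $G$-equivariance from Proposition \ref{prop:hf}(i), differentiating the identity $f(\exp(t\xi)\cdot x) = \phi(\alpha(\exp(t\xi))) + f(x)$ at $t=0$ shows that, for $\xi \in V$, the section $df(\op_X(1\otimes\xi))$ is the \emph{constant} section $1 \otimes \iota(\xi)$, where $\iota := d\phi \circ (d\alpha|_V) : V \to \Lie(A)$. In other words $df \circ (\op_X|_{\cO_X \otimes V}) = \cO_X \otimes \iota$.

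Now $\iota$ is injective: $\ker(d\alpha) = \Lie(G_{\aff})$ gives injectivity of $d\alpha|_V$ by (ii), while in characteristic $0$ the isogeny $\phi$ is separable, so $d\phi$ is an isomorphism. Choosing any $k$-linear retraction $\rho : \Lie(A) \to V$ with $\rho \circ \iota = \id_V$, the $\cO_X$-linear map $r := (\cO_X \otimes \rho) \circ df : \cT_X \to \cO_X \otimes V$ satisfies $r \circ (\op_X|_{\cO_X \otimes V}) = \cO_X \otimes (\rho \circ \iota) = \id$, exhibiting $\cO_X \otimes V$ as a direct summand of $\cT_X$, which is (iii). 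Finally $(iii) \Rightarrow (i)$ is formal: applying $-\otimes_{\cO_X} k(x)$ to a splitting shows that $\op_{X,x}|_V : V \to T_x(X)$ is injective for every $x$, so no nonzero $\xi \in V$ can vanish at any point. The main obstacle is the equivariance computation inside $(ii) \Rightarrow (iii)$, namely identifying the image under $df$ of the velocity fields of the $G$-action with the (nowhere vanishing) translation-invariant fields on $A$; once that identification $df \circ \op_X = \cO_X \otimes \iota$ is pinned down, producing the retraction is pure linear algebra.
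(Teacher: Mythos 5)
Your steps (i)$\Rightarrow$(ii) and (iii)$\Rightarrow$(i) are fine and essentially identical to the paper's (the paper puts $\xi$ in the Lie algebra of a Borel subgroup of $G_{\aff}$ rather than in the closure of its one-parameter group, but both reduce to Borel's fixed point theorem). The problem is in (ii)$\Rightarrow$(iii): you invoke Proposition \ref{prop:hf}(ii) to produce a homogeneous fibration $f : X \to A$ with $A(G) \to A$ an isogeny, but that proposition is stated, and is only valid, for \emph{normal projective} varieties, whereas Proposition \ref{prop:vf} assumes only that $X$ is complete and normal. This is not a removable hypothesis: Example \ref{subsec:ex}.3 of the paper exhibits a complete normal threefold $X$ with a faithful action of an elliptic curve $E$, for which $A(G)$ is isogenous to $E$ while $A(X)$ is trivial; since every morphism to an abelian variety factors through the Albanese, no morphism $f : X \to A$ with $A(G) \to A$ an isogeny exists there, and your construction produces nothing. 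Yet the conclusion of Proposition \ref{prop:vf} must still hold for that $X$. So as written your argument only proves (ii)$\Rightarrow$(iii) for projective (or nonsingular complete) $X$.

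The paper's way around this is to work on the nonsingular locus $U$ of $X$, which is open and $G$-stable: the Nishi--Matsumura theorem applies to the (non-complete) nonsingular variety $U$ and yields a $G$-equivariant morphism $\varphi : U \to A(G)/F$ for some finite subgroup $F$. One then runs exactly the computation you propose --- $d\varphi : \cT_U \to \varphi^*(\cT_{A(G)/F}) \cong \cO_U \otimes_k \Lie(A(G))$ composed with $\op_U\vert_{\cO_U \otimes V}$ is an isomorphism once $V$ is enlarged to a complement of $\Lie(G_{\aff})$ --- to split $\cO_U \otimes_k V$ off $\cT_U$. The final step, which your write-up is missing and which is where normality is genuinely used, is to pass from $U$ back to $X$: since $\cT_X = Hom_X(\Omega^1_X, \cO_X)$ is reflexive and $\codim(X \setminus U) \geq 2$, one has $\cT_X = i_*(\cT_U)$ for the inclusion $i : U \to X$, so the direct sum decomposition of $\cT_U$ pushes forward to one of $\cT_X$. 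If you either add the hypothesis that $X$ is projective, or replace your appeal to Proposition \ref{prop:hf}(ii) by this restriction-to-$U$ and reflexive-extension argument, your retraction construction goes through; your identification $df \circ (\op_X\vert_{\cO_X \otimes V}) = \cO_X \otimes \iota$ and the separability of isogenies in characteristic $0$ are correct and match the paper's use of the isomorphism $d\varphi \circ \psi$.
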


This result is proved in Subsection \ref{subsec:vf}. 
It implies readily that the maximal direct summands of $\cT_X$ 
which are trivial (i.e., direct sums of copies of $\cO_X$)
are exactly the $\cO_X \otimes_k V$, where $V$ is a subspace 
of $\Lie(G)$ such that $V \oplus \Lie(G_{\aff}) = \Lie(G)$. 
We may find such subspaces which are abelian Lie subalgebras
since $G = Z(G) G_{\aff}$, where $Z(G)$ denotes the center of 
$G$ (see e.g. \cite[Prop.~3.1.1]{BSU}). But in general, 
there exists no closed subgroup $H$ of $G$ such that 
$\Lie(H) \oplus \Lie(G_{\aff}) = \Lie(G)$, as shown by 
Examples \ref{subsec:ex}.1 and \ref{subsec:ex}.2.
A full decomposition of the tangent sheaf is obtained in
\cite{Greb-Kebekus-Peternell} for projective varieties 
with canonical singularities and numerically trivial 
canonical class.

As another consequence of Proposition \ref{prop:vf},
a vector field $\xi \in \Lie(G)$ has a zero if and only if
$\xi \in \Lie(G_{\aff})$. In particular, $\xi$ has no zero if 
it sits in the Lie algebra of some abelian subvariety of $G$. 
But the converse to the latter statement does not hold:
actually, \ref{subsec:ex}.1 and \ref{subsec:ex}.2 provide 
examples of nowhere vanishing vector fields $\xi$ on a projective 
nonsingular surface $X$, such that $\xi$ does not sit in 
the Lie algebra of any abelian variety acting on $X$, nor on 
any finite \'etale cover of $X$. This shows that some 
assumption is missing in a statement attributed to D. Lieberman 
in \cite{Peternell}, and in Conjecture 4.24 there; for example,
the assumption that $X$ is not uniruled (then $G$ is an abelian 
variety in view of Chevalley's theorem).

Finally, we consider another way to measure the nonlinearity 
of $G$, by bounding the dimension of its largest anti-affine 
subgroup in terms of the dimension of $X$. Recall that 
an algebraic group $H$ is called anti-affine 
if any global regular function on $H$ is constant. 
By a result of Rosenlicht (see \cite{Rosenlicht}, 
and \cite[Sec.~3.2]{BSU} for a modern proof),
$G$ has a largest anti-affine subgroup $G_{\ant}$, which is also 
the smallest closed subgroup mapped onto $A(G)$ by $\alpha$; 
moreover, $G_{\ant}$ is connected and contained in 
the center $Z(G)$. In particular, $G$ is linear if and only if
$G_{\ant}$ is trivial.

Actually, we bound the dimension of a connected algebraic group 
of automorphisms of a possibly non-complete variety $X$, 
in arbitrary characteristic. Then it is known that $X$ has 
a largest anti-affine group of automorphisms (see e.g. 
\cite[Prop.~5.5.4]{BSU}) and the proof given there shows 
that the dimension of this group is at most $3 \dim(X)$. 
Yet this bound is far from being sharp:

\begin{theorem}\label{thm:ant}
Let $X$ be a variety of dimension $n$, and $G$ an anti-affine
algebraic group of automorphisms of $X$. Then 
\[ \dim(G) \leq \cases{
\max(n,2n-4) & if $\charc(k) = 0$, \cr
n & if $\charc(k) > 0$,\cr} \] 
and this bound is optimal in both cases.
\end{theorem}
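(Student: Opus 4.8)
The plan is to combine the structure theory of anti-affine groups with an analysis of the generic isotropy group of the action. Since $G$ is anti-affine it is commutative, and Chevalley's theorem together with the classification of anti-affine groups (see \cite{BSU}) gives $G_{\aff} = T \times U$ with $T$ a torus and $U$ a vector group; I write $a = \dim A(G)$, $t = \dim T$, $u = \dim U$, so that $\dim G = a + t + u$. The two structural inputs I would use are that $u = 0$ when $\charc(k) > 0$ (anti-affine groups are then semi-abelian), and that $u \le a$ when $\charc(k) = 0$, because an anti-affine vector extension of $A$ by $U$ corresponds to an injection $U^* \hookrightarrow H^1(A,\cO_A)$ and $\dim H^1(A,\cO_A) = a$. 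I may assume $X$ irreducible and reduced.

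Next I would control the neutral component $H$ of the generic isotropy group $G_x$, for $x$ in a dense open subset. First, $H \subseteq G_{\aff}$: as the action is faithful and $X$ is reduced and connected, an element fixing $x$ and acting trivially on the formal neighborhood of $x$ is trivial, so $G_x$ embeds into the pro-affine automorphism group of $\widehat{\cO}_{X,x}$; since $H$ is an algebraic group this embedding already factors through a finite jet level, whence $H$ is affine and thus $H \subseteq G_{\aff}$. Second, $H$ is unipotent: its maximal subtorus projects isomorphically to a subtorus $S \subseteq T$ (there being no nontrivial homomorphisms from a torus to $U$), and a constructible family of subtori over an irreducible base is generically constant, so $S$ is independent of $x$; then $S$ fixes a dense open subset pointwise and faithfulness forces $S = \{e\}$. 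In particular, when $\charc(k) > 0$ we have $U = 0$, so $H$ is trivial, the generic orbit has dimension $\dim G$, and therefore $\dim G \le n$, as required.

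In characteristic $0$, set $s = \dim H \le u$ and $c = u - s$. If $s = 0$ the generic orbit again has dimension $\dim G \le n$. If $s > 0$, then $H \subseteq U$ is positive-dimensional, and a constant isotropy group along a dense orbit would fix it pointwise (as $G$ is commutative), contradicting faithfulness; hence there is no dense orbit and the generic orbit has dimension at most $n-1$. The same reasoning shows $H \ne U$, i.e. $c \ge 1$. Since the generic orbit has dimension $\dim G - s = a + t + c$, I obtain $a + t + c \le n-1$, hence $a + t \le n-2$; combining with $u \le a$ and $a \le a + t$ gives $\dim G = (a+t) + u \le (n-2) + a \le 2n-4$. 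Together with the previous cases this yields $\dim G \le \max(n, 2n-4)$.

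Finally I would verify optimality. The value $n$ is attained by an abelian variety of dimension $n$ acting on itself by translations, valid in every characteristic. For the bound $2n-4$ in characteristic $0$ I would take $A$ of dimension $n-2$ and let $G = E(A)$ be its universal vector extension $0 \to V \to E(A) \to A \to 0$ with $\dim V = \dim H^1(A,\cO_A) = n-2$, so that $G$ is anti-affine with $\dim G = 2(n-2) = 2n-4$; I would then realize a faithful action of $E(A)$ on a suitable $\bP^2$-bundle over $A$ of dimension $n$, where $V$ acts along the fibers through the extension class, so that the generic isotropy group is the expected $(n-3)$-dimensional subgroup of $V$ and the generic orbit has dimension $n-1$. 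I expect this construction to be the main obstacle: the upper bound is a fairly mechanical interlocking of the structural inequality $u \le a$ with the orbit-dimension count, but producing an explicit $n$-dimensional variety carrying a \emph{faithful} action of the full $(2n-4)$-dimensional group $E(A)$, and checking that the isotropy groups vary exactly enough to keep the action faithful while reaching dimension $2n-4$, requires a genuine geometric construction rather than a formal argument.
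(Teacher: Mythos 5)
Your proof of the upper bound is correct, and it takes a genuinely different route from the paper's. The paper reduces to $X$ normal and quasi-projective, produces a homogeneous fibration $\varphi : X \to B$ onto a quotient of $A(G)$, writes $X \cong G \times^H Y$ with $H \supseteq T \times U$ of finite index, and uses a splitting lemma (Lemma \ref{lem:tu}) to write a dense open subset of the fibre $Y$ as $T \times Z$; the bound then comes from $\dim(G) = n + \dim(U) - \dim(Z)$ together with $\dim(U) \le \dim(A)$ and a case analysis on $\dim(Z)$. Your argument replaces all of this by a direct count with the generic isotropy group: the chain ``$s>0$ forces no dense orbit, forces $c \ge 1$, forces $a+t \le n-2$, hence $\dim(G) \le (n-2)+a \le 2n-4$'' is valid and shorter, at the price of not yielding the structural description of the extremal varieties that the paper extracts from its fibration (Proposition \ref{prop:zer} (i), (ii)). The inputs you invoke (affineness of isotropy group schemes, generic triviality of the toric part of the stabilizer, and $\dim(U) \le \dim(A)$ for anti-affine vector extensions) are the same standard facts the paper takes from \cite{BSU}, and your sketches of them are essentially right.

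The gap is in the optimality claim, which is part of the statement. The abelian-variety example for the value $n$ is fine, but your proposed extremal example for $2n-4$ cannot work as described. Set $g = \dim(A) = n-2$. If $X \to A$ is an $E(A)$-equivariant $\bP^2$-bundle, then $X \cong E(A) \times^V X_0$ with $X_0 \cong \bP^2$ the fibre over the origin, and since $E(A)$ is commutative one has $v \cdot [e,x] = [e, v \cdot x]$; hence the map $x \mapsto [e,x]$ is a $V$-equivariant isomorphism from $X_0$ onto every fibre, so the fibrewise kernels of $V$ do \emph{not} vary, contrary to your picture, and the kernel of $V$ on $X$ equals the kernel of $V$ on $\bP^2$. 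Faithfulness would therefore force the $g$-dimensional vector group $V$ to embed into $\PGL_3$ as a commutative unipotent subgroup, and such subgroups have dimension at most $2$; so the construction fails for all $n \ge 5$. The paper's Example 3.3.2 fixes exactly this point by taking the fibre to be the Hirzebruch surface $\bF_{g-1} = \bP(\cO_{\bP^1}(g-1)\oplus\cO_{\bP^1})$, on which $V \cong H^0(\bP^1,\cO_{\bP^1}(g-1))$ (of dimension $g$) acts faithfully by translations along the fibres of $\bF_{g-1}\to\bP^1$, with generic isotropy of dimension $g-1 = n-3$ exactly as your dimension count predicts; then $X = E(A)\times^V\bF_{g-1}$ has dimension $g+2=n$ and carries a faithful action of the $(2n-4)$-dimensional anti-affine group $E(A)$.
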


The proof is given in Section \ref{sec:ant}; it yields some
information on the varieties for which the bound is attained.
In positive characteristic, these `extremal' varieties are 
just equivariant compactifications of semiabelian varieties, 
also called semiabelic varieties (see \cite{Alexeev}). 
The case of characteristic $0$ turns out to be more involved, 
and we do not obtain a full description of all extremal varieties;
examples are presented in the final subsection. 

\medskip

\noindent
{\bf Acknowledgements.} This work was began during a staying 
at the National University of Singapore in March 2012. 
I warmly thank the Institute for Mathematical Sciences 
for support, and De-Qi Zhang for stimulating discussions.

\section{Proofs of Propositions \ref{prop:hf} and \ref{prop:vf}}
\label{sec:hfvf}

\subsection{Proof of Proposition \ref{prop:hf}}
\label{subsec:hf}

(i) follows readily from a variant of a result of Blanchard
(see \cite[Prop.~I.1]{Blanchard} and \cite[Prop.~4.2.1]{BSU}):
let $\varphi : Y \to Z$ be a proper morphism of varieties such that 
$\varphi_*(\cO_Y) = \cO_Z$ and let $H$ be a connected algebraic group 
acting on $Y$. Then there is a unique action of $H$ on $Z$ such that 
$\varphi$ is equivariant. Indeed, by that result, the $G$-action
on $X$ induces an action on $A$ such that $f$ is equivariant.
This yields a homomorphism of algebraic groups $\beta : G \to A$ 
such that $G$ acts on $A$ by translations via $\beta$, and in turn 
a homomorphism $\gamma : A(G) \to A$. To complete the proof, 
it suffices to show that $\gamma$, or equivalently $\beta$,
is surjective. But since $f$ is isomorphic to its pull-backs 
under all translations of $A$, we see that these translations 
lift to automorphisms of $X$. In other words, denoting by 
$\Aut(X,f)$ the group scheme of automorphisms of $X$ preserving
$f$ and by $f_* : \Aut(X,f) \to \Aut(A)$ the natural homomorphism,
the image of $f_*$ contains the subgroup $A$ of translations. 
Since $G$ is the reduced connected component of $\Aut(X,f)$ and
$f_* \vert_G = \beta$, it follows that $\beta(G) = A$.

\noindent
(ii) By \cite[Thm.~2]{Brion10} (which generalizes a theorem 
of Nishi and Matsumura, see \cite{Matsumura}), there exists
a $G$-equivariant morphism $\varphi : X \to B$, where $B$ is 
an abelian variety, quotient of $A(G)$ by a finite subgroup 
scheme. Consider the Stein factorization 
$\varphi = \psi \circ f$, where $f : X \to A$ satisfies
$f_*(\cO_X) = \cO_A$, and $\psi: A \to B$ is finite. 
Then both $f$ and $\psi$ are $G$-equivariant.
Thus, the variety $A$ is a unique $G$-orbit. It follows 
that $A$ is also an abelian variety, quotient of $A(G)$ 
by a finite subgroup scheme. So $f$ is the desired homogeneous 
fibration.

If $X$ is assumed to be nonsingular (instead of 
projective and normal), then the assertion follows 
from the above-mentioned theorem of Nishi and Matsumura:
let $H$ be a connected algebraic group of automorphisms
of a nonsingular variety $Y$. Then there exists an 
$H$-equivariant morphism $\varphi : Y \to A$, where $A$ is 
the quotient of $A(H)$ by some finite subgroup scheme.
For a modern proof of that theorem, see \cite{Brion10}.

\subsection{Proof of Proposition \ref{prop:vf}}
\label{subsec:vf}

(i)$\Rightarrow$(ii) Let $\xi \in \Lie(G_{\aff})$. Then
$\xi$ sits in the Lie algebra of some Borel subgroup
$B$ of $G_{\aff}$ (see \cite[Thm.~4.11]{Grothendieck}). 
By Borel's fixed point theorem, $B$ fixes some point $x \in X$, 
which yields the desired zero of $\xi$. 

\noindent
(ii)$\Rightarrow$(iii) We may replace $V$ with any larger
subspace of $\Lie(G)$ that satisfies (ii), and hence we may
assume that $V \oplus \Lie(G_{\aff}) = \Lie(G)$. Equivalently,
the differential of $\alpha : G \to A(G)$ yields an
isomorphism $V \stackrel{\cong}{\longrightarrow} \Lie(A(G))$.

Consider the nonsingular locus $U$ of $X$. This is an open 
$G$-stable subset of $X$; hence there exists a $G$-equivariant 
morphism $\varphi: U \to A(G)/F$ for some finite subgroup $F$
of $A(G)$. By equivariance, the morphism $\varphi$
is smooth, and hence yields a surjective $\cO_U$-linear map
\[ d\varphi : \cT_U \longrightarrow 
\varphi^*(\cT_{A(G)/F}). \]
But $\varphi^*(\cT_{A(G)/F}) \cong \cO_U \otimes_k \Lie(A(G)/F)
\cong \cO_U \otimes_k \Lie(A(G))$
and hence we may view $d \varphi$ as an $\cO_U$-linear map
$\cT_U \to \cO_U \otimes \Lie(A(G))$. Also, denoting by
$\psi : \cO_U \otimes_k V \to \cT_U$ the restriction of $\op_U$,
the map 
$d\varphi \circ \psi : \cO_U \otimes_k V \to 
\cO_U \otimes_k \Lie(A(G))$ is an isomorphism in view of 
our assumption on $V$. Thus, $\psi$ identifies 
$\cO_U \otimes_k V$ to a direct summand of $\cT_U$.

To obtain the analogous assertion on $X$, just note that
the sheaf $\cT_X = Hom_X(\Omega^1_X,\cO_X)$ is reflexive;
hence $\cT_X = i_*(\cT_U)$, where $i : U \to X$ denotes the
inclusion. Moreover, every direct sum decomposition of 
$\cT_U$ yields a direct sum decomposition of $\cT_X$.

\noindent
(iii)$\Rightarrow$(i) The assumption implies that the
map $V \to T_x(X)$, $\xi \mapsto \xi_x$ is injective for any 
$x \in X$. Hence every $\xi \in V \setminus \{ 0 \}$ has no
zero.

\subsection{Examples}
\label{subsec:ex}

In this subsection, $E$ denotes an elliptic curve.
We first present (after \cite[Ex.~4.2.4]{BSU}) 
two examples of ruled surfaces $X$ over $E$ for which 
the connected automorphism group $G$ is neither linear, 
nor an abelian variety. The description of $G$ in both cases 
follows from the classification of the automorphism group 
schemes of all ruled surfaces by Maruyama (see 
\cite[Thm.~3]{Maruyama}).

\medskip

\noindent
{\bf 2.3.1} 
Let $V$ be a vector bundle of rank $2$ on $E$, obtained as 
a nonsplit extension of the trivial line bundle $\cO_E$ by 
$\cO_E$. Let 
\[ \pi : X := \bP(V) \longrightarrow E \] 
be the associated ruled surface. 
Then $G$ sits in an exact sequence of algebraic groups
\[ 1 \longrightarrow \bG_a \longrightarrow G 
\stackrel{\alpha}{\longrightarrow} E \longrightarrow 1, \]
where $\bG_a$ denotes the additive group. Moreover, $G$ acts on $X$ 
with two orbits: the obvious section of $\pi$, which is a closed orbit 
isomorphic to $G/\bG_a$, and its complement, an open orbit isomorphic 
to $G$. The class of the above extension in 
$\Ext^1(E,\bG_a) \cong H^1(E,\cO_E) \cong \Ext^1(\cO_E,\cO_E)$
is identified to the class of the extension 
$0 \to \cO_E \to V \to \cO_E \to 0$.

We claim that there exists no closed subgroup $H$ of $G$ 
such that $\Lie(H) \oplus \Lie(\bG_a) = \Lie(G)$. Otherwise,
the scheme-theoretic intersection $H \cap \bG_a$ is a finite 
reduced subgroup scheme of $\bG_a$, and hence is trivial.
It follows that $H$ is isomorphic to $E$ via $\alpha$; thus,
the multiplication of $G$ restricts to an isomorphism
$H \times \bG_a \cong G$. But this contradicts the fact that
$G$ is a nonsplit extension of $E$ by $\bG_a$.

Next, assume that $\charc(k) = 0$ and let $\xi \in \Lie(G)$
be a nowhere vanishing vector field, i.e., $\xi \notin \Lie(\bG_a)$.
We claim that there is no finite \'etale morphism
$f : \tilde{X} \to X$ such that $\xi$ (viewed as a vector field
on $\tilde{X}$) sits in the Lie algebra of some abelian variety
$A$ acting on $\tilde{X}$. Otherwise, by the Nishi-Matsumura 
theorem again, we obtain an $A$-equivariant morphism
$\varphi: \tilde{X} \to A/F$, where $F$ is a finite subgroup 
of $A$; using the Stein factorization, we may further assume
that the fibers of $\varphi$ are connected. Let $Y$ be the
fiber at the origin; then $Y$ is stable by $F$ and the
map $A \times Y \to \tilde{X}$, $(a,y) \mapsto a \cdot y$ 
factors through an $A$-equivariant isomorphism
$A \times^F Y \stackrel{\cong}{\longrightarrow} \tilde{X}$, 
where $A \times^F Y$ denotes the quotient of $A \times Y$ 
by the action of $F$ via $g \cdot (a,y) := (a -g, g \cdot y)$.
We may thus replace $\tilde{X}$ with its finite \'etale cover 
$A \times Y$. Now $f$ restricts to a finite \'etale morphism
$f^{-1}(\pi^{-1}(z)) \to \pi^{-1}(z)$ for any $z \in E$.
Since $\pi^{-1}(z) \cong \bP^1$, it follows that $\tilde{X}$
is covered by copies of $\bP^1$, and hence $Y \cong \bP^1$.
Thus, $A$ is an elliptic curve, and the projection 
$p : \tilde{X} = A \times \bP^1 \to A$ is the Albanese morphism. 
Since $\pi: X  \to E$ is the Albanese morphism as well, there 
exists a unique morphism $\varphi : A \to E$ such that the square
\[ \CD
\tilde{X} @>{p}>> A \\
@V{f}VV @V{\varphi}VV \\
X @>{\pi}>> E \\
\endCD \]
is commutative. Clearly, $\varphi$ is finite and surjective,
hence an isogeny of elliptic curves. Thus,
\[ \varphi^* : \Ext^1(\cO_E,\cO_E) = H^1(E,\cO_E) 
\longrightarrow H^1(A,\cO_A) = \Ext^1(\cO_A,\cO_A) \]
is an isomorphism. In particular, the vector bundle
$\varphi^*(V)$ is an nonsplit extension of $\cO_A$ by itself.
Moreover, the above commutative square yields a morphism
\[ \psi : A \times \bP^1 = \tilde{X} \longrightarrow 
X \times_E A = \bP(\varphi^*(V)) \]
which lifts the identity of $A$. Since $f$ factors through
$\psi$, we see that $\psi$ is finite and \'etale. It follows
that $\psi$ is an isomorphism, and hence that 
$\varphi^*(V) \cong L \oplus L$ for some line bundle $L$ on $A$.
But $\varphi^*(V)$ is indecomposable, a contradiction.

\medskip

\noindent
{\bf 2.3.2}
Consider the rank $2$ vector bundle $V := L \oplus \cO_E$, 
where $L$ is a line bundle of degree $0$ on $E$. 
Then for the associated ruled surface
$\pi : X \to E$, we have an exact sequence of algebraic groups 
\[ 1 \longrightarrow \bG_m \longrightarrow G 
\stackrel{\alpha}{\longrightarrow} E \longrightarrow 1, \]
where $\bG_m$ denotes the multiplicative group. Moreover, $G$ acts 
on $X$ with three orbits: the two obvious sections of $\pi$, 
which are closed orbits isomorphic to $G/\bG_m$, and their 
complement, an open orbit isomorphic to $G$. The class of 
the above extension in $\Ext^1(E,\bG_m) \cong \Pic^o(E) \cong E$ 
is identified to the class of $L$.

Assume that $L$ has infinite order, i.e., the extension giving
$G$ is not split by any isogeny $A \to E$. Then, arguing as in
the above example, one checks that there exists no closed subgroup
$H$ of $G$ such that $\Lie(H) \oplus \Lie(\bG_m) = \Lie(G)$.
Moreover, in characteristic $0$, given 
$\xi \in \Lie(G) \setminus \Lie(\bG_m)$, there is no finite 
\'etale morphism $f : \tilde{X} \to X$ such that $\xi$ sits 
in the Lie algebra of some abelian variety acting on $\tilde{X}$.

\medskip

\noindent
{\bf 2.3.3}
Next, we construct a complete normal threefold $X$ equipped 
with a faithful action of the elliptic curve $E$ and having 
a trivial Albanese variety.

By \cite[Ex.~6.4]{Brion10} (after \cite[XIII 3.2]{Raynaud}),
there exists a normal affine surface $Y$ having exactly two
singular points $y_1,y_2$ and equipped with a morphism
\[ f : \dot{E} \times \dot{\bA^1} \longrightarrow Y, \]
where $\dot{E} := E \setminus \{ 0 \}$ and 
$\dot{\bA^1} := \bA^1 \setminus \{ 0 \}$, such that 
$f(\dot{E} \times \{ 1 \}) = \{ y_1 \}$, 
$f(\dot{E} \times \{ -1 \}) = \{ y_2 \}$
and the restriction of $f$ to the complement,
$\dot{E} \times ( \dot{\bA^1} \setminus \{ 1, -1 \})$,
is an open immersion.

There exists a normal projective surface $Z$ containing
$Y$ as a dense open subset and having the same singular locus,
$\{ y_1,y_2 \}$. Let $V_1 := Z \setminus \{ y_2 \}$, 
$V_2 := Z \setminus \{ y_1 \}$ and $V_{12} := V_1 \cap V_2$. 
Then $V_{12}$ is nonsingular and contains
$\dot{E} \times ( \dot{\bA^1} \setminus \{ 1, -1 \})$ as an
open subset. So the projection of that subset to $\dot{E}$ 
extends to a dominant morphism 
\[ p : V_{12} \longrightarrow E. \] 
We may glue $E \times V_1$ and $E \times V_2$ along 
$E \times V_{12}$ via the automorphism of $E \times V_{12}$ 
given by
\[ (x,y) \longmapsto (x + p(y),y). \] 
This yields a variety $X$ equipped with an action of $E$ 
(by translations on the first factor of each $E \times V_i$) 
and with an $E$-equivariant morphism
\[ \pi : X \longrightarrow Z \]
which extends the projections $E \times V_i \to V_i$.
Clearly, $\pi$ is a principal $E$-bundle, locally trivial
for the Zariski topology. In particular, $\pi$ is proper,
and hence $X$ is complete; also, $X$ is normal since so is $Z$.

We claim that the Albanese variety of $V_1$ is trivial. 
Consider indeed a morphism $\alpha : V_1 \to A$, where $A$
is an abelian variety. Then the restriction of $\alpha$ to
$\dot{E} \times ( \dot{\bA^1} \setminus \{ 1, -1 \})$
factors through the projection to $\dot{E}$. But $V_1$ 
contains $f(\dot{E} \times ( \dot{\bA^1} \setminus \{ -1 \}))$,
and the morphism 
$\alpha \circ f : 
\dot{E} \times ( \dot{\bA^1} \setminus \{ -1 \}) \to A$ 
is constant, since $f(\dot{E} \times \{ 1 \}) = \{ y_1 \}$. 
Thus, $\alpha$ is constant as claimed.

By that claim, the Albanese variety of $Z$ is trivial as well. 
In view of Proposition \ref{prop:hf}, it follows that the
connected automorphism group $G(Z)$ is linear. Moreover,
the principal $E$-bundle $\pi$ yields a homomorphism 
$\pi_* : G = G(X) \to G(Z)$ with kernel contained in the
group of bundle automorphisms. The latter group is isomorphic 
to $\Hom(Z,E)$ (see e.g. \cite[Rem.~6.1.5]{BSU}), and hence
to $E$. As a consequence, $E = G_{\ant}$; in view of the
Rosenlicht decomposition (see e.g. \cite[Thm.~3.2.3]{BSU}),
it follows that $G = E G_{\aff}$, where $E \cap G_{\aff}$ 
is finite. So the natural map $E \to A(G)$ is an isogeny.

Finally, we show that $A(X)$ is trivial. Consider again 
a morphism $\beta : X \to A$ to an abelian variety.
Then by the claim, the restriction of $\beta$ to 
$E \times V_1$ is of the form $(x,y) \mapsto f_1(x)$
for some morphism $f_1 : E \to A$. Likewise, we obtain
a morphism $f_2 : E \to A$ such that $\beta(x,y) = f_2(x)$
for all $(x,y) \in E \times V_2$. By the construction of 
$X$, we then have $f_1(x) = f_2(x + p(y))$ for all $x \in E$
and $y \in V_2$. Thus, $f_1$ is constant, and so is $\beta$.

\medskip

\noindent
{\bf 2.3.4}
We now construct a complete nonsingular variety $X$
with connected automorphism group $E$, which admits 
no homogeneous fibration to $A(G) = E$.

Choose a positive integer $n$, not divisible by $\charc(k)$.
Let $n_E$ be the multiplication by $n$ in $E$, and 
$E[n]$ its kernel. Then $E[n] \cong (\bZ/n\bZ)^2$ and there 
exists a faithful irreducible projective representation 
\[ \rho : E[n] \longrightarrow \PGL_n. \] 
Consider the associated projective bundle, 
\[ f : X := E \times^{E[n]} \bP^{n-1} \longrightarrow E/E[n],\]
where $E[n]$ acts on $\bP^{n-1}$ via $\rho$. Then $f$ is 
a homogeneous fibration over $E$ identified to $E/E[n]$
via $n_E$. Clearly, $f$ is the Albanese morphism of $X$.
Also, $E$ acts faithfully on $X$ (since $E[n]$ acts
faithfully on $\bP^{n-1}$). 

We claim that the resulting homomorphism $E \to G = G(X)$ 
is an isomorphism. It suffices to check
that the kernel $K$ of the natural homomorphism
$f_* : G \to E/E[n]$ is finite, or that $\Lie(K)$
is trivial. But $\Lie(K)$ is contained in the space 
of global sections of the relative tangent sheaf 
$\cT_f$ or equivalently, of its direct image $f_*(\cT_f)$. 
Moreover, $f_*(\cT_f)$ is the $E$-linearized sheaf 
on $E/E[n]$ associated with the representation of $E[n]$ 
on $H^0(\bP^{n-1}, \cT_{\bP^{n-1}})$. Thus,
\[ H^0(E/E[n],f_*(\cT_f)) \cong 
(\cO(E) \otimes H^0(\bP^{n-1}, \cT_{\bP^{n-1}}))^{E[n]}
\cong H^0(\bP^{n-1}, \cT_{\bP^{n-1}})^{E[n]},\]
since $\cO(E) = k$. But $H^0(\bP^{n-1}, \cT_{\bP^{n-1}})$
is isomorphic to the quotient of the space of $n \times n$ 
matrices by the scalar matrices; this isomorphism is
$E[n]$-equivariant, where $E[n]$ acts on matrices by
conjugation via $\rho$. Since this projective representation 
is irreducible, we obtain 
$H^0(\bP^{n-1}, \cT_{\bP^{n-1}})^{E[n]} = 0$
which yields our claim.

In view of that claim, the action of $A(G)$ on $A(X)$ 
is just the action of $E$ on $E/E[n]$ by translations.
Hence $X$ admits no homogeneous fibration to $A(G)$.

\section{Proof of Theorem \ref{thm:ant}}
\label{sec:ant}

\subsection{In positive characteristics}
\label{subsec:pos}

Note first that we may assume $X$ to be normal, since every
connected automorphism group of $X$ acts on its normalization.
By \cite[Thm.~1]{Brion10}, $X$ is then covered by $G$-stable
quasi-projective open subsets; thus, we may further assume that
$X$ is quasi-projective.

We now consider the case where $\charc(k) >0$, which turns out 
to be the easiest. Indeed, any anti-affine algebraic group $G$ 
is a semi-abelian variety (see e.g. \cite[Prop.~5.4.1]{BSU}), 
i.e., $G$ sits in an exact sequence of connected commutative 
algebraic groups
\[ 1 \longrightarrow T \longrightarrow G \longrightarrow A
\longrightarrow 1, \]
where $T= G_{\aff}$ is a torus, and $A = A(G)$ an abelian variety. 
This yields readily:

\begin{proposition}\label{prop:pos}
Assume that $\charc(k) > 0$ and let $G$ be an anti-affine group 
of automorphisms of a normal variety $X$ of dimension $n$. Then 
$\dim(G) \leq n$ with equality if and only if 
$X \cong G \times^T Y$ for some toric $T$-variety $Y$.
\end{proposition}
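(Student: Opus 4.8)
The plan is to exploit the semiabelian structure $1 \to T \to G \to A \to 1$ (with $T = G_{\aff}$ a torus and $A = A(G)$ an abelian variety) together with an equivariant fibration over $A$. Having reduced to $X$ normal and quasi-projective, I would first invoke the Nishi--Matsumura theorem in the form of \cite[Thm.~2]{Brion10} to obtain a $G$-equivariant morphism $X \to A/F$ with $F \subseteq A$ a finite subgroup scheme, and then pass to its Stein factorization; write $\varphi : X \to A'$ for the resulting morphism with connected fibres, where $A' = A/F$ is again an abelian variety on which $G$ acts transitively by translations through $G \to A \to A'$. Let $Y = \varphi^{-1}(o)$ be the fibre over the origin and $S \subseteq G$ the stabiliser of $o$; then $S$ is the preimage of $F$ in $G$, so $S^\circ = T$ and $X \cong G \times^S Y$. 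In particular $\dim(X) = \dim(A') + \dim(Y) = \dim(A) + \dim(Y)$, whereas $\dim(G) = \dim(A) + \dim(T)$.

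The bound then reduces to the inequality $\dim(T) \le \dim(Y)$. The key point is that $T$, being central in $G$, acts on $X$ fibrewise over $A'$, and through the twisted-product description this action is faithful on $Y$ precisely because $G$ acts faithfully on $X$. A torus acting faithfully on a variety has finite generic stabiliser --- its neutral component is a subtorus fixing a dense open subset, hence fixing everything by separatedness, hence trivial --- so the generic $T$-orbit in $Y$ has dimension $\dim(T)$. This gives $\dim(T) \le \dim(Y)$ and therefore $\dim(G) \le n$.

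For the equality case I would argue that $\dim(G) = n$ forces $\dim(T) = \dim(Y)$, so the generic $T$-orbit is dense in $Y$; as its (finite) stabiliser then fixes a dense subset of $Y$ it is trivial, the orbit is a $T$-torsor, and $Y$ is a toric $T$-variety. Feeding a point of this open orbit into the twisted product yields a dense $G$-orbit in $X$, whose stabiliser is trivial since $G$ is commutative and acts faithfully; hence the orbit is isomorphic to $G$. The remaining step, which I expect to be the main obstacle, is to upgrade $\varphi$ to a genuine homogeneous fibration over $A(G)$ itself, i.e. to show that $F$ is trivial: the fibre $Y$ is normal --- deduced from the normality of $X$ via the torsor $G \times Y \to X$, with some care in positive characteristic should $F$ fail to be reduced --- and connected, hence irreducible, whereas its dense open subset $S \cdot x_0 \cong S$ would be disconnected if $F$ were nontrivial. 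Therefore $S = T$, $A' = A(G)$, and $X \cong G \times^{T} Y$. The converse is then a direct verification: for any toric $T$-variety $Y$ the map $G \times Y \to G \times^{T} Y = X$ is a $T$-torsor, so $X$ is normal with $\dim(X) = \dim(A) + \dim(Y) = \dim(A) + \dim(T) = \dim(G) = n$, and $G$ acts faithfully because its dense orbit is isomorphic to $G$.
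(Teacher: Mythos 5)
Your strategy---fibre $X$ equivariantly over a quotient of $A(G)$ and reduce to the torus acting on the fibre---is workable for complete $X$, but the proposition concerns an arbitrary normal variety (reduced only to quasi-projective), and there your first step already breaks. Stein factorization requires $\varphi$ to be proper; for non-complete $X$ (e.g.\ $X=G$ itself, the basic extremal example) the fibres of the equivariant morphism $X\to A/F$ supplied by \cite[Thm.~2]{Brion10} are genuinely disconnected whenever $F$ is a nontrivial \'etale subgroup scheme, and no Stein factorization is available to repair this. Your dimension count then loses its force: if $Y$ is disconnected, faithfulness of the $T$-action on $Y$ does not give $\dim(T)\leq\dim(Y)$, since $T$ could act through different subtori on different components. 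The paper avoids the fibration entirely: every isotropy subgroup scheme $G_x$ is affine, so its reduced neutral component lies in $T=G_{\aff}$, and $T_x$ is trivial for $x$ in a dense open subset by \cite[Lem.~5.5.5]{BSU}; hence the generic orbit has dimension $\dim(G)$ and the bound follows at once. (Your justification of generic finiteness of torus stabilizers---``its neutral component is a subtorus fixing a dense open subset''---also glosses over the fact that the stabilizer varies with the point; the correct argument is the eigenfunction one, as in Lemma~3.3 of the paper.)

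In the equality case, your argument that $F$ is trivial only excludes \'etale $F$: you argue that a dense open subset $S\cdot x_0\cong S$ of the irreducible $Y$ would be disconnected, but in positive characteristic---precisely the setting of this proposition---$F$ may be infinitesimal (a Frobenius kernel, say), in which case $S$ is connected but non-reduced, $S^{\circ}\neq T$, and connectedness of $Y$ detects nothing; to exclude this you would need the scheme-theoretic fibre $Y$ to be reduced, which is not automatic for a flat non-smooth fibration in characteristic $p$. The paper sidesteps all of this: once one has an open orbit with trivial isotropy subgroup scheme (a point you do establish correctly, via commutativity plus faithfulness), the conclusion $X\cong G\times^T Y$ with $Y$ toric is exactly \cite[Thm.~3]{Brion10}, which is cited rather than reproved. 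So the skeleton of your argument is reasonable, but as written it has genuine gaps at the non-properness and non-reducedness points, both specific to the positive-characteristic, possibly non-complete setting of the statement.
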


\begin{proof}
Let $x \in X$; then the isotropy subgroup scheme $G_x$ is affine
(see e.g. \cite[Cor.~2.1.9]{BSU}). Hence the reduced neutral
component of $G_x$ is contained in $T$. But $T_x$ is trivial 
for all $x$ in a dense open subset $U$ of $X$ (see 
\cite[Lem.~5.5.5]{BSU}) and hence $G_x$ is finite for all 
$x \in U$. So
\[ n = \dim(X) \geq \dim(G \cdot x) = \dim(G) - \dim(G_x) 
= \dim(G), \]
where $G \cdot x$ denotes the orbit of $x$. Moreover,  
equality holds if and only if $G \cdot x$ is open in $X$.
Then $G_x$ acts trivially on $X$, since $G$ is commutative; 
hence $G_x$ is trivial.

This shows that $\dim(G) \leq n$ with equality if and only if 
$X$ contains an open $G$-orbit with trivial isotropy subgroup
scheme. Since $X$ is normal, the latter statement is equivalent
to the existence of a toric $T$-variety $Y$ such that 
$X \cong G \times^T Y$, as follows from \cite[Thm.~3]{Brion10}. 
\end{proof} 

\subsection{In characteristic $0$}
\label{subsec:zer}

We begin by recalling the structure of anti-affine groups 
when $\charc(k) = 0$ (see e.g. \cite[Chap.~5]{BSU}). 
Let $G$ be a connected commutative algebraic group. 
Then there is an exact sequence of algebraic groups
\[ 1 \longrightarrow T \times U \longrightarrow G 
\longrightarrow A\longrightarrow 1,
\]
where $T$ is a torus, $U$ a vector group (i.e., the additive group
of a finite-dimensional vector space), and $A = A(G)$ an abelian
variety. Moreover, $G$ is anti-affine if and only if so are 
its quotients $G/U$ and $G/T$. Note that $G/U$ is a semi-abelian
variety, and $G/T$ is an extension of $A$ by a vector group.
Also, there is a universal such extension $E(A)$, and the 
corresponding vector group has the same dimension as $A$.
Moreover, $G/T$ is anti-affine if and only if it is a quotient 
of $E(A)$.

\begin{proposition}\label{prop:zer}
Assume that $\charc(k) = 0$ and let $G$ be an anti-affine group
of automorphisms of a normal, quasi-projective variety $X$ of 
dimension $n$. Then 
\[\dim(G) \leq \cases{n    & if $n \geq 4$, \cr
       2n-4 & if $n \leq 4$, \cr} \]
with equality if and only if one of the following cases occurs:

{\rm (i)} $X = G \times^{T \times U} Y$ for some variety $Y$ 
on which $T \times U$ acts with an open orbit and a trivial
isotropy group, if $n \leq 4$,

{\rm (ii)} $G$ is the universal vector extension of an abelian 
variety by a vector group $U$, and 
$X \cong G \times^{U \times F} S$ for some finite subgroup $F$
of $G$ and some (birationally) ruled surface $S$ on which
$U \times F$ acts by automorphisms preserving the ruling,
if $n \geq 4$.
\end{proposition}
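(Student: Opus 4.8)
The plan is to combine the structure theory of anti-affine groups in characteristic $0$ with an orbit-dimension count, refined by Proposition \ref{prop:vf}. Write $G_{\aff} = T \times U$ with $T$ a torus and $U$ a vector group, and set $a := \dim A(G)$, $t := \dim T$, $u := \dim U$, so that $\dim(G) = a + t + u$. The key point is that the anti-affine condition forces $u \le a$ (since $G/T$ is a quotient of the universal extension $E(A)$, whose vector part has dimension $a$), so $\dim(G) \le a + t + u \le 2a + t$. The task is then to bound $2a + t$ by the dimension of $X$. I would first record that, as in the proof of Proposition \ref{prop:pos}, a generic isotropy subgroup scheme $G_x$ satisfies $\Lie(G_x) \subseteq \Lie(G_{\aff})$: indeed Proposition \ref{prop:vf} says exactly that the vector fields with a zero are those in $\Lie(G_{\aff})$, so the generic orbit has dimension at least $a$, giving the crude bound $a \le n$. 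To do better I must separately control $a$, $t$, and $u$.

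The central geometric input is to analyze the generic fiber of the $G$-equivariant map $\varphi : U \to A(G)/F$ produced in the proof of Proposition \ref{prop:vf}, whose connected fibers $Y$ carry an action of $G_{\aff} = T \times U$. First I would dispose of the torus and the generic isotropy: the argument of Proposition \ref{prop:pos} shows $T$ acts with finite generic isotropy on each fiber (the open subset where $T_x$ is trivial meets the generic fiber), so $\dim Y \ge t + (\text{generic } U\text{-orbit dimension in } Y)$. The crucial claim is that the generic $U$-orbit in $Y$ has dimension at least $\min(u, \dim Y - ?)$, and more precisely that when $u$ is large relative to $\dim Y$, the vector group $U$ cannot act almost freely — this is where characteristic $0$ and the vector-group structure enter decisively. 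Since $\dim Y = n - a$ and the generic $G$-orbit has dimension $a + t + (\text{generic }U\text{-orbit})$ which is at most $n$, I get $a + t + (\text{generic }U\text{-orbit}) \le n$. Combining with $u \le a$ and splitting into the two regimes according to whether the generic $U$-action on $Y$ is free yields the two bounds $\dim(G) \le n$ and $\dim(G) \le 2n - 4$.

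The hard part, and the source of the number $2n-4$, will be the second regime: when the generic $U$-orbit in $Y$ has dimension strictly less than $u$, so that $U$ does \emph{not} act almost freely on the fibers. Here I expect to show that the quotient of $Y$ by the generic stabilizer is a ruled surface $S$, that $G$ must be the universal vector extension $E(A)$ (forcing $u = a$), and that the generic $U$-stabilizer has dimension exactly $u - 1$, so the generic $U$-orbit is a single $\bG_a$. Then $\dim Y = n - a$ contains this $1$-dimensional orbit together with a $2$-dimensional transverse ruled base and the torus directions; chasing the dimension count $\dim(G) = a + t + u = 2a + t$ against $\dim Y = n - a \ge 2 + (\text{orbit directions})$ produces $2a + t \le 2n - 4$ precisely when $n \le 4$. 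The equality analysis then identifies $X$ with the associated bundle $G \times^{U \times F} S$ of case (ii), respectively $G \times^{T \times U} Y$ of case (i) when the generic action is almost free. I anticipate the main technical obstacles are proving that the relevant transverse base is genuinely a \emph{surface} (ruling out higher-dimensional bases by maximality of $\dim G$) and establishing the associated-bundle isomorphism, for which I would invoke the descent and quotient results of \cite{Brion10} exactly as in Proposition \ref{prop:pos}.
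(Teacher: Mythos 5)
Your overall architecture does match the paper's: reduce to a homogeneous fibration $\varphi : X \to B$ over a quotient $B$ of $A(G)$ by a finite subgroup, so that $X \cong G \times^H Y$ with $Y$ the fiber at the origin and $H = T \times U \times F$ acting faithfully on $Y$; feed in the inequality $\dim(U) \leq \dim(A)$ coming from the universal vector extension; and run a dimension count on the $T \times U$-action on $Y$. But the decisive step is genuinely missing: your ``crucial claim'' is stated with a literal question mark, and the mechanism you propose for it --- that $U$ ``cannot act almost freely'' when $u$ is large, with generic $U$-stabilizer of dimension exactly $u-1$ and orbits that are single copies of $\bG_a$ --- describes only the \emph{equality} case and does not produce the constant $4$. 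The paper's actual source of $2n-4$ is elementary bookkeeping made possible by Lemma \ref{lem:tu}, which your plan lacks: $Y$ contains a dense $T\times U$-stable open subset of the form $T \times Z$ with $T$ acting on the first factor and $U$ through $Z$, whence $n = \dim(T) + \dim(Z) + \dim(A)$ and $\dim(G) = n + \dim(U) - \dim(Z) \leq n + \dim(A) - \dim(Z) = 2n - \dim(T) - 2\dim(Z) \leq 2n-4$ as soon as $\dim(Z) \geq 2$, while the cases $\dim(Z) \leq 1$ are handled separately by showing $\dim(G) \leq n$ there (either $T\times U$ has an open orbit in $Y$, forcing an open $G$-orbit in $X$ with trivial isotropy by commutativity and faithfulness, or $U$ acts trivially and $G$ is semi-abelian).

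Concretely, your dichotomy ``generic $U$-action free versus not free'' does not close. In the non-free regime your stated inequalities only give $\dim(G) \leq 2n - t - 2d$, where $d$ is the generic $U$-orbit dimension in $Y$; when $t=0$ and $d=1$ this is $2n-2$, not $2n-4$. To recover the bound one must split on $\dim(Z)$ (equivalently, on whether the $T\times U$-orbit is open in $Y$) rather than on freeness, and in the low-dimensional cases invoke faithfulness of the $H$-action on $Y$ to kill the isotropy. Two smaller points: Proposition \ref{prop:vf} is stated for \emph{complete} varieties, so for the quasi-projective $X$ here you should instead quote the affineness of isotropy group schemes as in the proof of Proposition \ref{prop:pos}; and in the extremal case (ii) the ruled surface is $Y$ itself (ruled over the curve obtained as a geometric quotient of a dense open subset by $U$), not a quotient of $Y$ by the generic stabilizer.
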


\begin{proof}
Arguing as in the proof of Proposition \ref{prop:hf} (ii), we 
obtain a homogeneous fibration $\varphi : X \to B$, where $B$ is 
an abelian variety, quotient of $A$ by a finite subgroup $F$.  
Write $B = G/H$ for some closed subgroup $H$ of $G$; 
then $H$ contains $G_{\aff} = T \times U$ 
as a subgroup of finite index. Moreover, 
$X \cong G\times^H Y$, where $Y$ denotes the fiber of $\varphi$
at the origin of $B$, so that $Y$ is a variety equipped 
with a faithful action of $H$. Since $H$ is a commutative affine 
algebraic group with neutral component $T \times U$, we have 
$H = T \times U \times F$ for some finite subgroup $F$ of $G$;
then $F \cong H/G_{\aff}$.

By Lemma \ref{lem:tu}, $Y$ contains a dense $T \times U$-stable 
open subset $Y_0$ of the form $T \times Z$ for some $U$-stable
variety $Z$, where $T \times U$ acts on $T \times Z$ via
$(t,u) \cdot (y,z) := (ty, uz)$. We now consider two opposite 
special cases:

1) If $U$ has an open orbit in $Z$, then $G$ has an open
orbit in $X$ and hence $\dim(G) \leq n$.

2) If $U$ acts trivially on $Z$, then $U$ is trivial and 
$G$ is a semi-abelian variety. Thus, 
$\dim(G) = \dim(T) + \dim(A) \leq \dim(Y) + \dim(A) = \dim(X)$,
i.e., we also have $\dim(G) \leq n$.

Returning to the general case, if $\dim(Z) \leq 1$ then we
are in case 1) or 2). So we may assume that 
\[ \dim(Z) \geq 2. \]
Since $\dim(G) = \dim(T) + \dim(U) + \dim(A)$ and
$n = \dim(T) + \dim(Z) + \dim(A)$, we have
$\dim(G) = n + \dim(U) - \dim(Z)$. But 
\[ \dim(U) \leq \dim(A) \] 
and hence 
\[ \dim(G)  \leq n + \dim(A) - \dim(Z) 
= 2n - \dim(T) - 2 \dim(Z) \leq 2 n - 4. \] 
Moreover, $\dim(G) = 2n-4$ if and only if these displayed 
inequalities are all equalities, i.e., 
$\dim(Z) = 2$, $\dim(T) = 0$ and $\dim(U) = \dim(A)$;
equivalently, $Y = Z$ is a surface and $G$ is the universal 
extension of $A$. We may further assume that the general 
orbits of $U$ in $Y$ have dimension $1$: otherwise, we are
again in case 1) or 2). Then there exists a dense open 
$U$-stable subset $Y_1$ of $Y$ having a geometric quotient
$\pi : Y_1 \to C$ by $U$, where $C$ is a nonsingular curve.
Replacing $Y_1$ with the intersection of its translates
by the elements of $F$, we may assume that $Y_1$ is stable
by $U \times F$. Then $F$ acts on $C$ so that $\pi$ is
equivariant, and hence $\pi$ is the desired ruling.
\end{proof}

\begin{lemma}\label{lem:tu}
Let $T$ be a torus, $U$ a connected unipotent algebraic group,
and $Y$ a variety equipped with a faithful action of 
$T \times U$. Then there exist a dense open $T \times U$-stable 
subset $Y_0$ of $Y$ and an $U$-variety $Z$ such that 
$Y_0 \cong T \times Z$ as $T \times U$-variety, where 
$T \times U$ acts on $T \times Z$ via the $T$-action on
itself by multiplication and the $U$-action on $Z$.
\end{lemma}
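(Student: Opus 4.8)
The plan is to trivialize the generic $T$-torsor on $Y$ in a way compatible with the $U$-action, which amounts to producing enough $U$-invariant $T$-eigenfunctions.

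Since $T \times U$ is connected, it preserves each irreducible component of $Y$, so I may assume $Y$ irreducible. First I would check that $T$ acts faithfully on $Y$: the kernel of the $T$-action is a subgroup $K \subseteq T$, and $K \times \{ e \}$ then lies in the kernel of the $T \times U$-action, which is trivial by faithfulness; hence $K$ is trivial. As $Y$ is irreducible, the generic $T$-stabilizer $S$ coincides with this kernel, for $Y^S$ is closed, contains a dense open subset, and is stable under $T \times U$ (since $S$ is central), so $Y^S = Y$ and $S \subseteq K = \{ e \}$. Thus the $T$-action is generically free. By Rosenlicht's theorem I would then form a dense open $T \times U$-stable subset $Y_1 \subseteq Y$ carrying a geometric quotient $\pi : Y_1 \to W := Y_1/T$; generic freeness makes $\pi$ a $T$-torsor, and since $U$ commutes with $T$ it descends to an action on $W$ for which $\pi$ is equivariant.

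Because $T$ is a torus, hence a special group, this torsor is Zariski-locally trivial, and over the generic point it is trivial by Hilbert's Theorem 90. The goal then becomes to choose the trivialization $U$-equivariantly, which is equivalent to producing a $T$-equivariant, $U$-invariant rational map $q : Y \dashrightarrow T$. Fixing a basis $\chi_1, \ldots, \chi_r$ of the character group $X^*(T)$ and identifying $T \cong \bG_m^r$, such a $q$ is the same as a family of $U$-invariant rational functions $f_1, \ldots, f_r \in k(Y)^U$ that are $T$-semi-invariant of weights $\chi_1, \ldots, \chi_r$. The heart of the argument, and the step I expect to be the main obstacle, is showing that such $U$-invariant eigenfunctions exist, i.e. that the $T$-weights occurring in the invariant field $k(Y)^U$ already generate $X^*(T)$. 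I would deduce this by passing to the rational quotient $\mu : Y \dashrightarrow Q := Y/U$ (again by Rosenlicht), which is $T$-equivariant with $k(Q) = k(Y)^U$, and proving that $T$ still acts generically freely on $Q$. For this, let $S' \subseteq T$ be the generic $T$-stabilizer on $Q$: then $S'$ preserves each generic $U$-orbit $O \cong U/U_y$, and since $S'$ commutes with the transitive $U$-action on $O$ while there is no nontrivial homomorphism from a torus to a unipotent group, $S'$ must fix $O$ pointwise. Hence $S'$ lies in the generic $T$-stabilizer on $Y$, which is trivial, so $T$ acts generically freely on $Q$. The weights of $T$ on $k(Q)$ then form a subgroup of $X^*(T)$ whose common kernel is the trivial generic stabilizer, forcing this subgroup to be all of $X^*(T)$; this yields the desired $f_1, \ldots, f_r$.

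Finally I would assemble the trivialization: the map $q = (f_1, \ldots, f_r)$ is $T$-equivariant and $U$-invariant, so $Z := q^{-1}(e)$ is $U$-stable, and the multiplication map $T \times Z \to Y$, $(t,z) \mapsto t \cdot z$, is $T \times U$-equivariant, with $T$ acting by left multiplication on the first factor and $U$ acting through $Z$ (here I use that $U$ commutes with $T$). Spreading out over a dense open $T \times U$-stable subset $Y_0$ on which $q$ is defined and this map is an isomorphism gives $Y_0 \cong T \times Z$ as required.
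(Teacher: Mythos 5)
Your proof is correct (for irreducible $Y$, which both your argument and the paper's implicitly assume, and which is the relevant case), and it shares the paper's endgame exactly: produce $U$-invariant rational $T$-eigenfunctions whose weights form a basis of $\widehat{T}$, assemble them into a $T$-equivariant, $U$-invariant map to $\bG_m^r \cong T$, and take $Z$ to be a fiber. The difference lies in how the eigenfunctions are produced. The paper uses Sumihiro's theorem to realize $Y$ as a $T \times U$-stable open subset of an affine $T \times U$-variety $Y'$, observes that the weights of $\cO(Y')$ generate $\widehat{T}$ by faithfulness of the $T$-action, and extracts a nonzero $U$-fixed vector from each eigenspace $\cO(Y')_{\lambda}$ (Kolchin's fixed-point theorem for the unipotent group $U$); Laurent monomials in these give the desired $g_j$. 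You instead pass to the Rosenlicht quotient $Q = Y/U$ and show that $T$ acts generically freely on $Q$, via the nice observation that a diagonalizable group cannot act nontrivially on a $U$-orbit $U/U_y$ by $U$-equivariant automorphisms, since $\Aut^U(U/U_y) \cong N_U(U_y)/U_y$ is unipotent; this forces the weight group of $k(Q) = k(Y)^U$ to be all of $\widehat{T}$. Both routes are sound, but two remarks are worth making. First, your last inference (trivial generic stabilizer on $Q$ implies full weight lattice) uses that the common kernel of a proper subgroup $\Lambda \subsetneq \widehat{T}$ has a nontrivial $k$-point; this fails in characteristic $p$ when $\Lambda$ has $p$-power index (the common kernel is then infinitesimal), so your argument as written is genuinely a characteristic-zero argument, whereas the paper's works verbatim in all characteristics — harmless here, since the lemma is only invoked in the characteristic-zero part of the paper. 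Second, identifying the generic $T$-stabilizer on $Q$ with the common kernel of the weights of $k(Q)$ still requires an equivariant affine (or at least linearized) model, so the Sumihiro-type input has not really been avoided, only relocated; on balance the paper's direct construction is shorter, while yours isolates the conceptual reason ($T$ still acts generically freely after dividing by $U$) for why enough $U$-invariant eigenfunctions exist.
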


\begin{proof}
We may replace $Y$ with any $T \times U$-stable open subset, 
and hence assume that $Y$ is normal. Next, by results of Sumihiro
(see \cite[Th.~1, Th.~2]{Sumihiro}), we may assume that $Y$
has an equivariant locally closed embedding in the 
projectivization $\bP(V)$ of a finite-dimensional 
$T \times U$-module $V$.  We may further assume that $Y$ is 
not contained in the projectivization of any proper submodule
of $V$. The dual module $V^*$ contains an eigenvector $f$
of the connected commutative algebraic group $T \times U$. 
Since $Y$ is not contained in the hyperplane $(f = 0)$, 
we may replace $Y$ with the complement of this hyperplane, 
and hence assume that $Y$ is a $T \times U$-stable
open subset of some affine $T \times U$-variety $Y'$.

Let $\cO(Y') = \bigoplus_{\lambda} \cO(Y')_{\lambda}$
be the decomposition of the coordinate ring of $Y'$ into 
eigenspaces of $T$, where $\lambda$ runs over the character group 
$\widehat{T}$. Then each $\cO(Y')_{\lambda}$ is also $U$-stable. 
Since $T$ acts faithfully on the affine variety $Y'$, 
the group $\widehat{T}$ is generated by the characters 
$\lambda$ such that $\cO(Y')_{\lambda} \neq 0$. 
Thus, we may choose finitely many such characters, say
$\lambda_1,\ldots,\lambda_N$, which generate $\widehat{T}$. 
Then there exist integers $a_{ij}$, where $1 \leq i \leq N$
and $1 \leq j \leq \dim(T) := r$, such that the characters
$\mu_j := \sum_{i=1}^N a_{ij} \lambda_i$ ($j = 1, \ldots, r$)
form a basis of $\widehat{T}$.
Also, each $U$-module $\cO(Y')_{\lambda_i}$ contains a nonzero
$U$-fixed point, say $f_i$. Now consider the Laurent monomials
\[ g_j := \prod_{i = 1}^N f_i^{a_{ij}} \quad (j = 1, \ldots, r). \]
These are $U$-invariant rational functions on $Y'$, or 
equivalently on $Y$. Let $Y_0$ be the largest open subset of
$Y$ on which $g_1,\ldots,g_r$ are all regular and invertible. 
Then the product map
\[ h := g_1 \times \cdots \times g_r : 
Y_0 \longrightarrow \bG_m^r \]
is a $U$-invariant morphism; moreover, identifying $T$ with
$\bG_m^r$ via $\mu_1 \times \cdots \times \mu_r$, we see 
that $h$ is also $T$-equivariant. Thus, 
$Y_0 \cong T \times Z$, where $Z := h^{-1}(1,\ldots,1)$,
and this isomorphism has the required properties.
\end{proof}

\subsection{Examples}
\label{subsec:exbis}   
 
In this subsection, $A$ denotes an abelian variety.

\medskip

\noindent
{\bf 3.3.1}
Let $G$ be a semi-abelian variety, extension of $A$ by a torus 
$T$. Every such extension is classified by a homomorphism
$c : \widehat{T} \to \Pic^o(A)$, where $\widehat{T}$ denotes
the character group of $T$, and $\Pic^o(A)$ is the dual abelian 
variety of $A$. Moreover, $G$ is anti-affine if and only if 
$c$ is injective (see e.g. \cite[Sec.~5.3]{BSU}).

Next, let $Y$ be a toric $T$-variety and consider the 
associated bundle
\[ \varphi : X := G \times^T Y \longrightarrow G/T = A.\]
Then $X$ is a normal variety on which $G$ acts with 
an open orbit having a trivial isotropy subgroup scheme. 
In particular, $\dim(G) = \dim(X)$.

Assume that $X$ (or equivalently $Y$) is complete and that 
$G$ is anti-affine. Then we claim that $G = G(X)$ (the largest 
connected automorphism group of $X$). Indeed, $\varphi$ 
is a homogeneous fibration and hence yields a homomorphism 
\[ \varphi_* : G(X) \longrightarrow A \]
which restricts to the Albanese map $\alpha : G \to A$.
In particular, $\varphi_*$ is surjective; its kernel $K$ is
contained in the group scheme of relative automorphisms.
Also, since $G(X)$ contains the anti-affine group $G$ and
hence centralizes that group, we see that $K$ is contained 
in the group scheme of $G$-equivariant relative automorphisms, 
$\Aut_A^G(X)$. But $\Aut_A^G(X) \cong \Hom^T(G,\Aut(Y))$.
Moreover, since $Y$ is rational, every connected component 
of $\Aut(Y)$ is affine, and hence every morphism $G \to \Aut(Y)$ 
is constant. Thus, $\Aut_A^G(X) \cong \Aut^T(Y)$. But the latter 
group scheme is just $T$, since $Y$ contains $T$ as an open orbit. 
It follows that $K = T$; this yields our claim.

\medskip

\noindent
{\bf 3.3.2}
Denote by $E(A)$ the universal vector extension of $A$. 
This is a connected commutative algebraic group that sits in an
exact sequence
\[ 0 \longrightarrow V \longrightarrow E(A) \longrightarrow A 
\longrightarrow 0, \]
where $V := H^1(A,\cO_A)^*$ is a vector space of dimension 
$g := \dim(A)$. Moreover, $E(A)$ is anti-affine if and only if 
$\charc(k) = 0$ (see \cite[Sec.~5.4]{BSU}). 

Next, let 
\[ \pi : \bF_{g-1}:= \bP(\cO_{\bP^1}(g-1) \oplus \cO_{\bP^1})
\longrightarrow \bP^1 \] 
be the rational ruled surface of index $g -1$; then 
$V$ acts on $\bF_{g-1}$ by translations. The associated bundle
\[ X := E(A) \times^V \bF_{g-1} \longrightarrow E(A)/V = A \]
is a homogeneous fibration. Moreover, $\dim(G) = 2g$ while
$n := \dim(X) = g + 2$, so that $\dim(G) = 2 n - 4$.
Arguing as in the above example, one checks that 
$G$ is again the largest connected automorphism group of $X$.

\end{document}